\long\def\symbolfootnote[#1]#2{\begingroup%
\def\thefootnote{\fnsymbol{footnote}}\footnote[#1]{#2}\endgroup}
\def\R{\mathbb{R}}
\newif\ifhide
\newif\ifexpose
\def\GG{{G}}
\def\expec#1{{\mathbb E[#1]}\,}
\def\prob#1{\textrm{Pr}[#1]}
\date{}
\title{Sub-Gaussian tails for the number of triangles in $\GG(n,p)$}
\author{{Guy Wolfovitz\thanks{Department of Computer Science, 
Haifa University, Haifa, Israel. Email address:
{\tt gwolfovi@cs.haifa.ac.il}.}}}
\newtheorem{theorem}{Theorem}[section]
\newtheorem{lemma}[theorem]{Lemma}
\newtheorem{definition}{Definition}
\newtheorem{question}[theorem]{Question}
\renewcommand{\epsilon}{\varepsilon}
\DeclareMathOperator{\var}{Var}
\newtheoremstyle{upright}%
        {8pt plus2pt minus4pt}%
        {8pt plus2pt minus4pt}%
        {\upshape}%
        {}%
        {\bfseries}%
        {:}%
        {1em}%
        {}%
\theoremstyle{upright}
\newcommand{\ignore}[1]{}
\begin{document}

\maketitle

\begin{abstract}
Let $X$ be the random variable that counts the number of triangles in the
random graph $\GG(n,p)$.  We show that for some absolute constant $c$, the
probability that $X$ deviates from its expectation by at least $\lambda
\var(X)^{1/2}$ is at most $e^{-c\lambda^2}$, provided that $n^{-1}(\ln n)^{10}
\le p \le n^{-1/2}(\ln n)^{-10}$, $\lambda = \omega(\ln n)$ and $\lambda \le
\min\{(np)^{1/2}, n^{-3/4}p^{-3/2}, n^{1/6}\}$.
\end{abstract}

\section{Introduction} \label{sec:1}
%
In this paper we consider the standard Erd\H{o}s-R\'enyi random graph
$\GG(n,p)$, in which every edge of $K_n$ appears independently with probability
$p$.  We study the number of triangles, denoted by $X$, in $\GG(n,p)$.  This is
a classical topic in the theory of random graphs~\cites{ER60, Sc79, B81, KR83,
JLR87, Ru88, F89, Vu01, Vu02, JR02, KV04, JOR04, K90}. Our starting point is
the following question regarding the distribution of $X$.  This question has
been explicitly raised and studied by Vu~\cites{Vu01,Vu02} and more recently by
Kannan~\cite{K90}.
\begin{question} For which $p$ and $\lambda$ does $X$ have the sub-Gaussian
tails
\begin{eqnarray} \label{eq:q} \prob{|X - \expec{X}| \ge \lambda \var(X)^{1/2}}
\le e^{-c\lambda^2}, \end{eqnarray}
where $c$ is an absolute positive constant?  \end{question}

Ruci\'nski~\cite{Ru88} showed that if $1/2 \ge p = \omega(n^{-1})$ then
$\frac{X - \expec{X}} {\var(X)^{1/2}}$ tends in distribution to the normal
distribution $N(0,1)$.  This implies that~(\ref{eq:q}) holds for the same range
of $p$ for every constant~$\lambda \ge 0$. Vu~\cites{Vu01} showed that there is
a constant $c_1 > 0$ such that~(\ref{eq:q}) holds for $p = \omega(n^{-1/2}\ln
n)$ and $c_1np^2 \ge \lambda = \omega(\ln n)$.  Vu~\cite{Vu02} also showed for
every constant $c_2 > 0$ there is a constant $c_3 > 0$ such that~(\ref{eq:q})
holds if $p \ge n^{-1/2 + c_2}$ and $0 \le \lambda \le n^{c_3}$.  Recently,
Kannan~\cite{K90} showed that there are constants $c_4, c_5 > 0$ such that if
$c_4n^{-1}\ln n \le p \le c_4n^{-3/5}$, then for $\lambda = O(np)$, $\prob{|X -
\expec{X}| \ge \lambda \var(X)^{1/2}} \le c_5 e^{-c\lambda}$ for some absolute
constant $c$.  
In this paper we improve upon Kannan's result both by expanding the range of
$p$ and by giving a better upper bound on the tail.  More importantly, our
result complements in a way Vu's results, in that it addresses the question
above with regard to the case where $n^{-1}(\ln n)^{10} \le p \le n^{-1/2}(\ln
n)^{-10}$.  Formally, we prove the following.
\begin{theorem}
\label{thm:main}
Inequality~(\ref{eq:q}) is valid if $n^{-1}(\ln n)^{10} \le p \le n^{-1/2}(\ln
n)^{-10}$, $\lambda = \omega(\ln n)$ and $\lambda \le \min\{(np)^{1/2},
n^{-3/4}p^{-3/2}, n^{1/6}\}$.
\end{theorem}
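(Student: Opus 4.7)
The plan is to use an edge-exposure martingale combined with a Freedman-type inequality, exploiting the fact that the conditional variance is typically much smaller than the worst-case increment bound would suggest. Fix an enumeration $e_1,\dots,e_N$ of the pairs in $[n]$, let $\mathcal F_i$ be the $\sigma$-algebra generated by the first $i$ edge indicators, and set $M_i = \mathbb{E}[X \mid \mathcal F_i]$, so $M_0=\mathbb{E}[X]$ and $M_N=X$. An elementary calculation factors the increments as $\Delta_i = (\oneb{e_i \in G} - p)\,D_i$, where for $e_i = uv$,
\[
D_i \;=\; \sum_{w \ne u,v} q^{(i-1)}_{uw}\,q^{(i-1)}_{vw},
\]
with $q^{(i-1)}_{xy} = \oneb{xy \in G}$ if $xy$ has been exposed by step $i{-}1$ and $q^{(i-1)}_{xy} = p$ otherwise; thus $D_i$ equals the $\mathcal F_{i-1}$-conditional expected number of triangles through $e_i$. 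In the range of the theorem $np^2 \le (\ln n)^{-20}$, so $\var(X)$ is dominated by the single-triangle term of order $n^3 p^3$, and the target scale of concentration is $\lambda n^{3/2}p^{3/2}$.

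My plan is to apply Freedman's inequality on a high-probability event on which $|\Delta_i| \le b$ uniformly and the predictable quadratic variation $V := \sum_i \mathbb{E}[\Delta_i^2 \mid \mathcal F_{i-1}]$ satisfies $V \le v$ with $v$ chosen of order $\var(X)$. This gives a tail $\exp(-t^2/(2v + 2bt/3))$, sub-Gaussian for $t \lesssim v/b$. A short second-moment calculation shows $\mathbb{E}[V] = O(n^3 p^3) \asymp \var(X)$. Choosing $b$ to match the worst-case conditional codegree of a pair of vertices along the exposure produces the cutoff $\lambda \lesssim (np)^{1/2}$ that appears as one of the hypotheses of the theorem. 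The uniform bound on $b$ is a codegree estimate handled by Chernoff plus a union bound over vertex pairs, and the condition $\lambda = \omega(\ln n)$ is precisely what is needed to absorb the $n^{O(1)}$ union-bound factor inside the target $e^{-c\lambda^2}$ failure probability.

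The technical heart will be to establish $V \le v$ with failure probability also $e^{-c\lambda^2}$. Since $V$ is a degree-$4$ polynomial in the edge indicators, this is itself a sub-Gaussian concentration question for a graph polynomial one level down in complexity. I would handle it with a two-round exposure argument: first reveal a carefully chosen sub-graph of $\GG(n,p)$ to freeze the bulk of $V$ and compute its conditional expectation, then analyse the residual fluctuation with a further martingale whose increments are only polylogarithmic in $n$. Balancing the density of the first round against the tails of the auxiliary codegree events should yield the remaining constraints $\lambda \le n^{-3/4}p^{-3/2}$ and $\lambda \le n^{1/6}$. The main obstacle I anticipate is exactly this sharp control of $V$: off-the-shelf concentration inequalities for degree-$4$ polynomials (Kim--Vu and variants) only yield stretched-exponential tails of the form $e^{-c\lambda^{1/2}}$, so the argument cannot proceed by a black-box appeal. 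One has to exploit the graphical structure of $D_i$ --- in particular that the dominant fluctuations of $V$ come from a restricted family of rigid sub-configurations (paths of length two with controlled multiplicity) --- in order to restore the $\lambda^2$ in the exponent.
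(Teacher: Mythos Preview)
Your strategy is genuinely different from the paper's, and the step you yourself flag as the main obstacle --- obtaining sub-Gaussian concentration for the predictable quadratic variation $V$ --- is precisely where your outline is incomplete. The paper does not run a single edge-exposure martingale with Freedman. Instead it realises $G(n,p)$ as the result of $I\le \ln n$ rounds of independent edge-subsampling with a fixed constant retention probability $\epsilon$ (so $\epsilon^I=p$), starting from $K_n$. At each round it applies McDiarmid's bounded-differences inequality to $X_{i+1}$, the Lipschitz coefficient at edge $e$ being $Z_{i,e}$, the number of triangles through $e$ in $G_i$; the denominator is thus $\sum_e Z_{i,e}^2$, the direct analogue of your $V$. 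Rather than concentrating this quantity in one shot, the paper carries an inductive upper bound on $\sum_e Z_{i,e}^2$ (together with $X_i$ and a uniform codegree bound $Y_{i,e}$) from round to round, applying McDiarmid a second time to $\sum_e Z_{i+1,e}^2$ with Lipschitz coefficients controlled by the step-$i$ data. The three constraints on $\lambda$ fall out of these McDiarmid applications in the two regimes $\epsilon^i\gtrless n^{-1/2}$: the bound $\lambda\le n^{-3/4}p^{-3/2}$ from concentrating $X_{i+1}$ when $\epsilon^i\ge n^{-1/2}$, and $\lambda\le n^{1/6}$, $\lambda\le (np)^{1/2}$ from concentrating $\sum_e Z_{i+1,e}^2$ in the dense and sparse regimes respectively. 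The hypothesis $\lambda=\omega(\ln n)$ is used not for a union bound over $n^2$ pairs but to absorb the additive error accumulated over $I\approx\ln n$ rounds into the final $O(\lambda)(np)^{3/2}$ window.

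What the multi-round sprinkling buys is that one never confronts a degree-$4$ polynomial in $\mathrm{Bernoulli}(p)$ variables directly: at each round the retention probability is a constant, so the Lipschitz coefficients are bounded by quantities already pinned down in the previous round, and $\sum_e Z_{i,e}^2$ shrinks geometrically. Your ``two-round exposure'' for $V$ is the germ of the same idea, but two rounds are almost certainly not enough: the paper needs $\Theta(\ln n)$ rounds so that each step moves the density by only a constant factor, which is what keeps every McDiarmid application in the sub-Gaussian regime. As written, your sketch asserts that the constraints $\lambda\le n^{-3/4}p^{-3/2}$ and $\lambda\le n^{1/6}$ will emerge from ``balancing the density of the first round against the tails of the auxiliary codegree events'', but no mechanism is given, and standard two-scale arguments for degree-$4$ polynomials typically lose a power in the exponent. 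If you try to push the argument through, you will most likely be forced to iterate the control of $V$, at which point you have essentially reconstructed the paper's method inside the Freedman framework.
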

The proof of Theorem~\ref{thm:main} employs an iterative invocation of
McDiarmid's inequality (which is stated at the next subsection), and a certain
iterative view of the random graph $\GG(n,p)$. The proof is given in the next
section.

\subsection{McDiarmid's inequality}
%
Let $\alpha_1, \alpha_2, \ldots, \alpha_m$ be independent random variables with
$\alpha_i$ taking values in a set $A_i$. Let $f : \prod_{i=1}^m A_i \to \R$
satisfy the following Lipschitz condition: if two vectors $\alpha, \alpha' \in
\prod_{i=1}^m A_i$ differ only in the $i$th coordinate, then $|f(\alpha) -
f(\alpha')| \le a_i$.  McDiarmid's inequality~\cite{McD} states that the random
variable $W = f(\alpha_1, \alpha_2,\ldots, \alpha_m)$ satisfies for any $t \ge
0$,
\begin{eqnarray*} \prob{|W - \expec{W}| \ge t} \le
2\exp\bigg(-\frac{2t^2}{\sum_{i=1}^m a_i^2}\bigg).  \end{eqnarray*}
%

\section{Proof of Theorem~\ref{thm:main}} \label{sec:2}
%
Fix $p$ and $\lambda$ within the ranges asserted by the theorem. It is safe to
assume, and we will use this implicitly in the proof, that $n \ge n_0$ for some
sufficiently large constant $n_0$ which we do not explicitly state (otherwise
the theorem is trivial).  The proof of the theorem relies on the analysis of
the following iterative process, which gives an alternative definition of
$\GG(n,p)$:
\begin{definition} \label{def} Let $\epsilon \le 1/1000$ be a constant such that
$\epsilon^I = p$ for some integer $I \le \ln n$. Let $G_0 := K_n$.  Given
$G_i$, $i \ge 0$, construct $G_{i+1}$ by taking every edge in $G_i$
independently with probability~$\epsilon$. End upon obtaining $G_I$.
\end{definition}
It is clear that $G_i$ has the same distribution as $\GG(n,\epsilon^i)$.  In
particular, by the definition of~$I$, $G_I$ has the same distribution as
$\GG(n,p)$.  Let $X_i$ be the random variable that counts the number of
triangles in $G_i$ and note that $X = X_I$.  Let $Y_{i,e}$ be the number of
sets $\{e', e''\} \subseteq G_i$ such that $\{e, e', e''\}$ is a triangle.  Let
$Z_{i,e} := Y_{i,e} \cdot {\bf 1}[e \in G_i]$, where ${\bf 1}[e \in G_i]$ is
the indicator function for the event that $e \in G_i$. (In words, $Z_{i,e}$ is
equal to $Y_{i,e}$ if $e \in G_i$ and is equal to $0$ otherwise.)  We will use
$r \pm s$ below to denote the interval $[r-s, r+s]$. The following lemma, as we
soon show, can be easily used to prove the theorem. 
\begin{lemma} \label{lemma}
There is a constant $c_0 > 0$ s.t. for all $0 \le i < I$ the following holds.
Assume that
\begin{itemize} 
\item $X_i \in \binom{n}{3} \epsilon^{3i} \pm \big(\binom{n}{3}\epsilon^{3i} i
(np)^{-3/2} + 0.1 \lambda \sqrt{n^3\epsilon^{3i}}\big)$.  
\item $\forall e \in K_n. \,\,\,\, Y_{i,e} \le \max\{4 n \epsilon^{2i} +
\lambda\sqrt{4n\epsilon^{2i}}, \lambda^2\}$.  
\item $\sum_{e \in K_n} Z_{i,e}^2 \le n^4 \epsilon^{5i} (1 + in^{-1/4}) + 10
\binom{n}{3}\epsilon^{3i}$.  
\end{itemize}
Then each of the following items occurs with probability at least
$1-2e^{-c_0\lambda^2}$.
\begin{itemize} 
\item $X_{i+1} \in \binom{n}{3} \epsilon^{3(i+1)} \pm \big(\binom{n}{3}
\epsilon^{3(i+1)} (i+1) (np)^{-3/2} + 0.1 \lambda
\sqrt{n^3\epsilon^{3(i+1)}}\big)$.  
\item $\forall e \in K_n. \,\,\,\, Y_{i+1,e} \le \max\{4 n \epsilon^{2(i+1)} +
\lambda \sqrt{4n\epsilon^{2(i+1)}}, \lambda^2\}$. 
\item $\sum_{e \in K_n} Z_{i+1,e}^2 \le n^4 \epsilon^{5(i+1)} (1 +
(i+1)n^{-1/4}) + 10 \binom{n}{3}\epsilon^{3(i+1)}$.  
\end{itemize}
\end{lemma}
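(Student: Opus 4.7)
The plan is to condition on $G_i$ satisfying the three step-$i$ hypotheses and apply McDiarmid's inequality (together with standard Chernoff bounds for sums of independent Bernoullis) to each of the three step-$i+1$ conclusions, with the underlying independent random variables being the Bernoulli$(\epsilon)$ retention indicators $\{\chi_e\}_{e\in G_i}$ that define $G_{i+1}\subseteq G_i$. In each case the template is: compute the conditional expectation, use the step-$i$ hypothesis to see how much slack is available between this expectation and the step-$i+1$ target, produce Lipschitz constants for the functional, invoke McDiarmid/Chernoff, and verify that the deviation at probability $2e^{-c_0\lambda^2}$ fits inside the slack (with $c_0$ taken sufficiently small).

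For Item 1, $\mathbb{E}[X_{i+1}\mid G_i]=\epsilon^3 X_i$, and flipping $\chi_e$ changes $X_{i+1}$ by at most $Z_{i,e}$ (the number of triangles of $G_i$ through $e$). McDiarmid gives $\Pr[|X_{i+1}-\epsilon^3 X_i|\ge t\mid G_i]\le 2\exp(-2t^2/\sum_e Z_{i,e}^2)$, and the per-step slack $\binom{n}{3}\epsilon^{3(i+1)}(np)^{-3/2}+0.1\lambda(1-\epsilon^{3/2})\sqrt{n^3\epsilon^{3(i+1)}}$ swallows the resulting deviation exactly when $\lambda$ satisfies $\lambda\le(np)^{1/2}$ and $\lambda\le n^{-3/4}p^{-3/2}$. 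For Item 2, fix $e=\{u,v\}$ and write $Y_{i+1,e}=\sum_{w\in S}\chi_{\{u,w\}}\chi_{\{v,w\}}$, where $S$ is the set of common neighbours of $u,v$ in $G_i$ (so $|S|=Y_{i,e}$); the summands are independent Bernoulli$(\epsilon^2)$ variables with disjoint edge-supports, so Chernoff around the mean $\epsilon^2 Y_{i,e}$ is the right tool. In the ``large'' case $Y_{i,e}\le 4n\epsilon^{2i}+\lambda\sqrt{4n\epsilon^{2i}}$ it places $Y_{i+1,e}$ below $4n\epsilon^{2(i+1)}+\lambda\sqrt{4n\epsilon^{2(i+1)}}$; in the ``small'' case $Y_{i,e}\le\lambda^2$ it keeps $Y_{i+1,e}\le\lambda^2$. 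A union bound over the $\binom{n}{2}$ edges is absorbed since $\lambda=\omega(\ln n)$.

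For Item 3, compute the conditional expectation: since $Y_{i+1,e}\mid G_i$ is a sum of $Y_{i,e}$ independent Bernoulli$(\epsilon^2)$ variables, $\mathbb{E}[Y_{i+1,e}^2\mid G_i]=\epsilon^4 Y_{i,e}^2+\epsilon^2 Y_{i,e}(1-\epsilon^2)$, and multiplying by $\Pr[e\in G_{i+1}\mid e\in G_i]=\epsilon$ gives
\[
\mathbb{E}\!\left[\sum_e Z_{i+1,e}^2\,\Big|\,G_i\right]\le\epsilon^5\sum_e Z_{i,e}^2+3\epsilon^3 X_i,
\]
which by the step-$i$ hypotheses is smaller than the step-$i+1$ target by a slack of order $n^4\epsilon^{5(i+1)}\cdot n^{-1/4}$. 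Now apply McDiarmid to $\sum_e Z_{i+1,e}^2$ as a function of $\{\chi_f\}_{f\in G_i}$: flipping $\chi_f$ alters $Z_{i+1,e}$ only for edges $e$ sharing a vertex with $f$ and forming a triangle of $G_i$ with $f$. Using the step-$i$ uniform bound on $Y_{i,\cdot}$, the Lipschitz constants can be controlled so that $\sum_f a_f^2$ is bounded in terms of $\max_e Y_{i,e}$ and $\sum_e Z_{i,e}^2$, and the constraint $\lambda\le n^{1/6}$ then forces the McDiarmid deviation to fit inside the $n^{-1/4}$-slack.

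The delicate step is Item 3: the quadratic functional $\sum_e Z_{i+1,e}^2$ has simultaneous dependence on many edge-indicators through several distinct mechanisms (an edge participates in multiple triangles, and each triangle couples three $Z_{i+1,\cdot}$ terms), so identifying Lipschitz constants sharp enough for the deviation to fit inside the tight per-step slack is the main combinatorial challenge. In Items 1 and 2 the absorption into the slack is also tight and uses all three constraints on $\lambda$; if any constraint were weakened, the iteration would no longer close.
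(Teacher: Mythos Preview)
Your plan is the paper's: McDiarmid on the edge-retention indicators for Items~1 and~3, Chernoff plus union bound for Item~2, and for Item~3 the Lipschitz constant for flipping $\chi_f$ is indeed $O\big(Z_{i,f}\cdot\max_e Y_{i,e}\big)$, so that $\sum_f a_f^2$ is controlled by $(\max_e Y_{i,e})^2\cdot\sum_e Z_{i,e}^2$, exactly as in the paper.

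There is one genuine gap in Item~3. You compute the slack between $\mathbb{E}\big[\sum_e Z_{i+1,e}^2\mid G_i\big]$ and the step-$(i{+}1)$ target as ``of order $n^4\epsilon^{5(i+1)}n^{-1/4}$'', coming from the increment $i\to i+1$ in the factor $(1+in^{-1/4})$. This misses a second piece of slack: the expectation carries at most about $6\binom{n}{3}\epsilon^{3(i+1)}$ (from $3\epsilon^3 X_i$ plus $\epsilon^5\cdot 10\binom{n}{3}\epsilon^{3i}$), whereas the target allows $10\binom{n}{3}\epsilon^{3(i+1)}$, so there is an additional slack of order $\binom{n}{3}\epsilon^{3(i+1)}$. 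In the regime $\epsilon^i<n^{-1/2}$ (the late iterations, where $\epsilon^i$ is close to $p$) this second slack dominates, and it is the one that makes McDiarmid close. Indeed, there $\max_e Y_{i,e}\le 2\lambda^2$ and $\sum_e Z_{i,e}^2\le 2n^3\epsilon^{3i}$, so $\sum_f a_f^2$ is of order $\lambda^4 n^3\epsilon^{3i}$; with only your $n^4\epsilon^{5(i+1)}n^{-1/4}$ slack the McDiarmid exponent is of order $n^{9/2}\epsilon^{7i}/\lambda^4$, which is $o(1)$ when $\epsilon^i$ is near $p$ and $p$ is near $n^{-1}(\ln n)^{10}$. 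Using the $\binom{n}{3}\epsilon^{3(i+1)}$ slack instead gives an exponent of order $n^3\epsilon^{3i}/\lambda^4$, and then the constraint $\lambda\le(np)^{1/2}\le(n\epsilon^i)^{1/2}$ (not $\lambda\le n^{1/6}$) yields $\ge c_0\lambda^2$. The paper handles both Items~1 and~3 by splitting into the cases $\epsilon^i\ge n^{-1/2}$ and $\epsilon^i<n^{-1/2}$, using a different part of $t_1,t_2$ in each; you will need the same split, both pieces of slack, and both of $\lambda\le n^{1/6}$ and $\lambda\le(np)^{1/2}$ to push Item~3 through across all iterations.
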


\begin{proof}[Proof of Theorem~\ref{thm:main}]
The preconditions in Lemma~\ref{lemma} hold trivially for $i=0$.  Since $I \le
\ln n$ and $\lambda = \omega(\ln n)$, we thus get from Lemma~\ref{lemma} that
with probability at least $(1 - 6e^{-c_0\lambda^2})^{I} \ge 1 - e^{-0.5c_0
\lambda^2}$, 
\begin{eqnarray*}
X = X_I \in \binom{n}{3}{\epsilon^{3I}} \pm \bigg(\binom{n}{3}\epsilon^{3I}
I(np)^{-3/2} + 0.1 \lambda\sqrt{n^3\epsilon^{3I}} \bigg) \subseteq \expec{X}
\pm 0.2 \lambda (np)^{3/2},
\end{eqnarray*}
where the last containment follows since $\expec{X} = \binom{n}{3}p^3$ and $p =
\epsilon^{I}$, and from the upper and lower bounds on $I$ and~$\lambda$
respectively.  This implies the validity of Theorem~\ref{thm:main}, as one can
easily verify that $\var(X) \ge 0.1(np)^3$ for our choice of $p$.
\end{proof}

It remains to prove Lemma~\ref{lemma}. Fix a constant $c_0 > 0$, sufficiently
small so that it satisfies our claims below.  Fix $0 \le i < I$ and assume that
we are given $G_i$ and that the preconditions in the lemma hold for $i$. We
show that each of the three consequences in the lemma holds with probability at
least $1 - 2e^{-c_0 \lambda^2}$. 

{\bf First consequence.} Clearly, $$\expec{X_{i+1}} = \epsilon^3 X_i \in
\binom{n}{3}\epsilon^{3(i+1)} \pm \bigg( \binom{n}{3} \epsilon^{3(i+1)} i
(np)^{-3/2} + 0.1 \epsilon^{1.5}\lambda\sqrt{n^3\epsilon^{3(i+1)}}\bigg).$$ We
need to bound from above the probability that $X_{i+1}$ deviates from its
expectation by more than $t_1 := \binom{n}{3} \epsilon^{3(i+1)} (np)^{-3/2} +
0.1 (1- \epsilon^{1.5}) \lambda \sqrt{n^3\epsilon^{3(i+1)}}$.  Every edge $e
\in G_i$ has an outcome which is either the event that $e \in G_{i+1}$ or not.
Note that $X_{i+1}$ depends on the outcomes of the edges in $G_i$. Also note
that changing the outcome of a single edge $e \in G_i$ can change $X_{i+1}$ by
at most $Z_{i,e}$ and that $\sum_{e \in G_i} Z_{i,e}^2 = \sum_{e \in K_n}
Z_{i,e}^2$.  Therefore, by McDiarmid's inequality and by the assumed upper
bound on $\sum_{e \in K_n} Z_{i,e}^2$,
\begin{eqnarray} \label{eq:q1}
\prob{|X_{i+1} - \expec{X_{i+1}}| > t_1} \le 2 \exp\bigg(-\frac{t_1^2}{\sum_{e
\in K_n} Z_{i,e}^2}\bigg) \le 2 \exp\bigg(-\frac{t_1^2}{2n^4\epsilon^{5i} +
2n^3\epsilon^{3i}}\bigg).
\end{eqnarray}
We have two cases. The first case is that $\epsilon^{i} \ge n^{-1/2}$.  In that
case $2n^4\epsilon^{5i} + 2n^3\epsilon^{3i} \le 4 n^4\epsilon^{5i}$.  Using
this and the fact that $t_1 \ge 0.1n^3 \epsilon^{3(i+1)} (np)^{-3/2}$, we get
from~(\ref{eq:q1}) that $$\prob{|X_{i+1} - \expec{X_{i+1}}| > t_1} \le 2
\exp\bigg(-\frac{ 0.01 n^6 \epsilon^{6(i+1)} (np)^{-3}}{4
n^4\epsilon^{5i}}\bigg) \le 2 \exp\bigg(-\frac{c_0 \epsilon^i}{np^3}\bigg) \le
2 e^{-c_0 \lambda^2}, $$ where the last inequality follows from the fact that
$\epsilon^i \ge n^{-1/2}$ and $\lambda \le n^{-3/4} p^{-3/2}$.

Next consider the case that $\epsilon^i < n^{-1/2}$.  In that case,
$2n^4\epsilon^{5i} + 2n^3\epsilon^{3i} \le 4 n^3\epsilon^{3i}$.  Since $t_1 \ge
0.05\lambda \sqrt{n^3\epsilon^{3(i+1)}}$, we get from~(\ref{eq:q1}) that
$$\prob{|X_{i+1} - \expec{X_{i+1}}| > t_1} \le 2 \exp\bigg(-\frac{ 0.0025
\lambda^2 n^3 \epsilon^{3(i+1)}}{4 n^3 \epsilon^{3i}}\bigg) \le 2 e^{-c_0
\lambda^2}.$$

{\bf Second consequence.} Fix $e \in K_n$. If $Y_{i,e} \le \lambda^2$ then
clearly $Y_{i+1,e} \le Y_{i,e} \le \lambda^2$ with probability~$1$. Otherwise
we have $Y_{i,e} > \lambda^2$.  Hence by assumption, $Y_{i,e} \le
4n\epsilon^{2i} + \lambda \sqrt{4n\epsilon^{2i}}$.  This implies that
$\expec{Y_{i+1,e}} \le 4n\epsilon^{2(i+1)} + \epsilon \lambda
\sqrt{4n\epsilon^{2(i+1)}}$. Note that $4n\epsilon^{2i} \ge 0.25 \lambda^2$,
since otherwise $Y_{i,e} \le \lambda^2$. Hence
$\expec{Y_{i+1,e}} \le 4n\epsilon^{2(i+1)} + \epsilon
\lambda\sqrt{4n\epsilon^{2(i+1)}} \le 12 n \epsilon^{2(i+1)}$. This in turn
implies, again using $4n\epsilon^{2i} \ge 0.25 \lambda^2$, that
$\expec{Y_{i+1},e} + O(\lambda \sqrt{4n\epsilon^{2(i+1)}}) =
O(n\epsilon^{2(i+1)})$. Therefore, by Chernoff's bound, the probability that
$Y_{i+1,e}$ deviates from its expectation by more than $(1-\epsilon) \lambda
\sqrt{4n\epsilon^{2(i+1)}}$ is at most $e^{-2c_0 \lambda^2}$. Thus, for a fixed
$e \in K_n$, we have that $Y_{i+1,e} \le \max\{4n \epsilon^{2(i+1)} +
\lambda\sqrt{4 n \epsilon^{2(i+1)}}, \lambda^2\}$ with probability at least $1
- e^{-2c_0 \lambda^2}$.  It now follows from the union bound and the fact that
$\lambda = \omega(\ln n)$ that with probability at least $1 - n^2
e^{-2c_0\lambda^2} \ge 1 - 2 e^{-c_0 \lambda^2}$, $Y_{i+1,e} \le
\max\{4n\epsilon^{2(i+1)} + \lambda \sqrt{4n\epsilon^{2(i+1)}}, \lambda^2\}$
holds for all $e \in K_n$ simultaneously, as needed.

{\bf Third consequence.} We start by estimating $\expec{Z_{i+1,e}^2}$ from
above for a fixed $e \in K_n$.  Clearly, if $e \notin G_i$ then
$\expec{Z_{i+1,e}^2} = Z_{i,e} = 0$. So assume $e \in G_i$.  If $e \notin
G_{i+1}$ then trivially $\expec{Z_{i+1,e}^2} = 0$. Conditioning on the event
that $e \in G_{i+1}$, $Z_{i+1,e}$ is a bonimial random variable with mean
$\epsilon^2 Z_{i,e}$ and variance $\epsilon^2 (1-\epsilon^2) Z_{i,e}$.
Therefore, conditioning on $e \in G_{i+1}$, we have that $\expec{Z_{i+1,e}^2} =
\expec{Z_{i+1,e}}^2 + \var(Z_{i+1,e}) \le \epsilon^4 Z_{i,e}^2 + \epsilon^2
Z_{i,e}$.  Adding the fact that the event $e \in G_{i+1}$ occurs with
probability $\epsilon$ we can conclude that without the conditioning on $e \in
G_{i+1}$, $\expec{Z_{i+1,e}^2} \le \epsilon^5 Z_{i,e}^2 + \epsilon^3 Z_{i,e}$.

Let $Z := \sum_{e \in K_n}  Z_{i+1,e}^2$. By linearity of expectation and the
previous paragraph, $$\expec{Z} = \sum_{e \in K_n} \expec{Z_{i+1,e}^2} \le
\sum_{e \in K_n} \epsilon^5 Z_{i,e}^2 + \epsilon^3 Z_{i,e}.$$ Every triangle in
$G_i$ is counted exactly $3$ times in the sum $\sum_{e \in K_n} Z_{i,e}$ and so
$\sum_{e \in K_n} Z_{i,e} = 3 X_i$. Also, $X_i \le 2\binom{n}{3}
\epsilon^{3i}$, and this follows from the assumed estimate on $X_i$ and the
bounds on $p,~\lambda$~and~$I$.  This implies that $\sum_{e \in K_n} \epsilon^3
Z_{i,e} = 3 \epsilon^3 X_i \le 6 \binom{n}{3} \epsilon^{3(i+1)}$. Using this,
the above upper bound on $\expec{Z}$ and the assumed upper bound on $\sum_{e
\in K_n} Z_{i,e}^2$ we get that \begin{eqnarray*} \expec{Z} &\le& n^4
\epsilon^{5(i+1)} ( 1 + in^{-1/4} ) + 10 \binom{n}{3} \epsilon^{3i + 5} + 6
\binom{n}{3} \epsilon^{3(i+1)} \\ &\le& n^4 \epsilon^{5(i+1)} ( 1 + in^{-1/4} )
+ 9 \binom{n}{3} \epsilon^{3(i+1)}.  \end{eqnarray*}

It remains to estimate from above the probability that $Z$ deviates from its
expectation by more than $t_2 := n^4\epsilon^{5(i+1)}n^{-1/4} + \binom{n}{3}
\epsilon^{3(i+1)}$.  Clearly $Z$ depends on the outcome of the edges in $G_i$.
Fix $e \in G_i$ and let $\sum_{\{e', e''\}}$ be the sum over all $Z_{i,e}$ sets
$\{e', e''\}$ such that $\{e, e', e''\}$ is a triangle in $G_i$.  We claim that
changing the outcome of~$e$ can change $Z$ by at most \begin{eqnarray*}
Z_{i,e}^2 + \sum_{\{e',e''\}} (2Z_{i,e'} + 2Z_{i,e''} + 2) & = & Z_{i,e}
Y_{i,e} + \sum_{\{e', e''\}} (2Y_{i,e'} + 2Y_{i,e''} + 2) \\ &\le&
Z_{i,e}Y_{i,e} + 4 Z_{i,e} \cdot \max\{ 4n\epsilon^{2i} + \lambda
\sqrt{4n\epsilon^{2i}}, \lambda^2\} + 2Z_{i,e} \\ &\le& Z_{i,e}Y_{i,e} + 6
Z_{i,e} \cdot \max\{ 4n\epsilon^{2i} + \lambda \sqrt{4n\epsilon^{2i}},
\lambda^2\} \\ &\le& 7 Z_{i,e} \cdot \max\{4n\epsilon^{2i} + \lambda
\sqrt{4n\epsilon^{2i}}, \lambda^2\}.  \end{eqnarray*} Indeed, if $e \notin
G_{i+1}$ then $Z_{i+1,e} = 0$ and otherwise $Z_{i+1,e} \le Z_{i,e}$. Hence,
changing the outcome of~$e$ can change $Z_{i+1,e}^2$ by at most $Z_{i,e}^2$. In
addition, for every triangle $\{e, e', e''\}$ in $G_i$, changing the outcome
of~$e$ can change $Z_{i+1,e'}$ and $Z_{i+1,e''}$ each by at most~$1$. Since
$Z_{i+1,e'} \le Z_{i,e'}$, this implies that changing the outcome of~$e$ can
change $Z_{i+1,e'}^2$ by at most $(Z_{i,e'}+1)^2 - Z_{i,e'}^2 \le 2 Z_{i,e'} +
1$. The same argument also shows that changing the outcome of $e$ can change
$Z_{i+1,e''}^2$ by at most $2 Z_{i,e''} + 1$. Lastly note that changing the
outcome of $e$ can affect only the sum $Z_{i+1,e}^2 + \sum_{\{e',e''\}}
Z_{i+1,e'}^2 + Z_{i+1,e''}^2$.

If $4n\epsilon^{2i} \le \lambda^2$ then $\max\{4n\epsilon^{2i} +
\lambda\sqrt{4n\epsilon^{2i}}, \lambda^2\} \le 2\lambda^2$. If on the other
hand $4n\epsilon^{2i} > \lambda^2$ then $\max\{4n\epsilon^{2i} +
\lambda\sqrt{4n\epsilon^{2i}}, \lambda^2\} \le 8n\epsilon^{2i}$.  Hence
$\max\{4n\epsilon^{2i} + \lambda \sqrt{4n\epsilon^{2i}}, \lambda^2\}$ is at
most $\max\{8n\epsilon^{2i}, 2\lambda^2\}$. Also note that $\sum_{e \in G_i}
Z_{i,e}^2 = \sum_{e \in K_n} Z_{i,e}^2$. Therefore, given the discussion above,
it follows from McDiarmid's inequality and the assumed upper bound on $\sum_{e
\in K_n} Z_{i,e}^2$ that \begin{eqnarray} \label{eq:q2} \nonumber \prob{|Z -
\expec{Z}| > t_2} &\le& 2 \exp\bigg(-\frac{t_2^2}{\sum_{e \in G_i} (7 Z_{i,e}
\cdot \max\{ 8n\epsilon^{2i}, 2\lambda^2 \})^2}\bigg) \\ &\le& 2
\exp\bigg(-\frac{t_2^2}{100 \max\{64n^2\epsilon^{4i}, 4\lambda^4\} \cdot
(n^4\epsilon^{5i} + n^3\epsilon^{3i})} \bigg). \end{eqnarray} 
Assume that $\epsilon^i \ge n^{-1/2}$.  In that case, $n^4\epsilon^{5i} +
n^3\epsilon^{3i} \le 2n^4\epsilon^{5i}$. In addition, trivially $t_2 \ge
n^4\epsilon^{5(i+1)}n^{-1/4}$.  Therefore, from~(\ref{eq:q2}) it follows that
\begin{eqnarray*} \prob{|Z - \expec{Z}| > t_2} &\le& 2
\exp\bigg(-\frac{n^8\epsilon^{10(i+1)}n^{-1/2}} {100 \max\{64n^2\epsilon^{4i},
4\lambda^4\} \cdot 2n^4\epsilon^{5i}}\bigg) \\ &\le& 2
\exp\bigg(-\frac{c_0n^{3.5}\epsilon^{5i}}{\max\{n^2\epsilon^{4i},
\lambda^4\}}\bigg) \\ &\le& 2 e^{-c_0\lambda^2},\end{eqnarray*} where the last
inequality follows since $\epsilon^i \ge n^{-1/2}$ and $\lambda \le n^{1/6}$.

Next assume that $\epsilon^i < n^{-1/2}$.  In that case, $\max\{64 n^2
\epsilon^{4i}, 4 \lambda^4\} = 4 \lambda^4$ and $n^4\epsilon^{5i} +
n^3\epsilon^{3i} \le 2n^3\epsilon^{3i}$. In addition, trivially, $t_2 \ge 0.1
n^3\epsilon^{3(i+1)}$. Therefore, from~(\ref{eq:q2}),
\begin{eqnarray*} \prob{|Z - \expec{Z}| > t_2} &\le& 2 \exp\bigg(-\frac{0.01
n^6 \epsilon^{6(i+1)}} {400 \lambda^4 \cdot 2n^3 \epsilon^{3i}}\bigg) \\ &\le&
2 \exp\bigg(- \frac{c_0 n^3 \epsilon^{3i}}{\lambda^4}\bigg) \\ &\le& 2
e^{-c_0\lambda^2}, \end{eqnarray*} where the last inequality follows since
$\lambda \le (np)^{1/2} \le (n\epsilon^i)^{1/2}$.

\section{Concluding remarks}
Combining Theorem~\ref{thm:main} and a result of Vu~\cite{Vu01}, we have that
for every $p \ge n^{-1}(\ln n)^{10}$, if $p$ does not satisfy $n^{-1/2}(\ln
n)^{-10} < p \le O(n^{-1/2}\ln n)$, then one can take some~$\lambda =
\lambda(n)$ that goes to~$\infty$ with~$n$ so that the probability that $X$
deviates from its expected value by at least $\lambda \var(X)^{1/2}$ is at most
$e^{-c\lambda^2}$, for some absolute constant $c$.  One question that remains
open is what happens when $p = n^{-1/2}$.  That is, can one show
that~(\ref{eq:q}) is valid for $p = n^{-1/2}$ and for some~$\lambda =
\lambda(n)$ that goes to $\infty$ with $n$?

In the proof of Theorem~\ref{thm:main} we had to assume that $\lambda =
\omega(\ln n)$. In fact we probably could have proved Theorem~\ref{thm:main}
had we assumed that $\lambda \ge C\ln n$ for some sufficiently large
constant~$C$.  However, our argument would have failed if we took $\lambda =
o(\ln n)$. This is because if $\lambda = o(\ln n)$, then Lemma~\ref{lemma} only
implies that with probability at least $1 - e^{-0.5c_0\lambda^2}$, $X \in
\expec{X} \pm \omega(\lambda) (np)^{3/2}$, and this does not imply the theorem.
This naturally raises the following question: is it true that~(\ref{eq:q})
holds for $n^{-1}(\ln n)^{10} \le p \le n^{-1/2}(\ln n)^{-10}$ and, say,
$\lambda = \sqrt{\ln n}$? 

Finally, we note that our argument for the proof of Theorem~\ref{thm:main} can
be generalized so as to prove a rather general concentration result for
functions with large Lipschitz coefficients.  This is the subject of a
forthcoming paper. This concentration result can provide some new sub-Gaussian
tail bounds for the number of copies of $H$ in $\GG(n,p)$, for a large family
of graphs $H$.

\begin{bibdiv}
\begin{biblist}


\bib{B81}{article}{
      author={Bollob\'as, B.},
       title={The threshold functions for small subgraphs},
       date={1981},
     journal={Math. Proc. Cambridge Philos. Soc.},
       pages={197--206},
     volume={90},
}

\bib{ER60}{article}{
      author={Erd\H{o}s, P.},
      author={R\'enyi, A.},
       title={On the evolution of random graphs},
       date={1960},
     journal={Publ. Mat. Inst. Hungar. Acad. Sci.},
       pages={17--61},
     volume={5},
}

\bib{F89}{article}{
      author={Frieze, A.},
       title={On small subgraphs of random graphs},
       date={1989},
     journal={Proceedings of Random graph, 1989, Poznan},
       pages={67--90},
}

\bib{JLR87}{article}{
      author={Janson, S.},
      author={Luczak, T.},
      author={Ruci\'nski, A.},
       title={An exponential bound on the nonexistence of specified subgraph in
a random graph},
       date={1987},
     journal={Random graphs 1987},
       pages={73--87},
   publisher={Wiley, New York},
}

\bib{JOR04}{article}{
      author={Janson, Svante},
      author={Oleszkiewicz, K.},
      author={Ruci\'nski, Andrzej},
       title={Upper tails for subgraph counts in random graphs},
        date={2004},
     journal={Israel~J.~Math.},
      volume={142},
       pages={61--92},
}

\bib{JR02}{article}{
      author={Janson, Svante},
      author={Ruci\'nski, Andrzej},
       title={The infamous upper tail},
        date={2002},
     journal={Random Struct. Algorithms},
      volume={20},
      number={3},
       pages={317--342},
}


\bib{K90}{article}{
      author={Kannan, Ravi},
       title={A new probability inequality using typical moments and concentration results},
       date={2009},
     journal={Proceedings of the 50th Annual Symposium on Foundations of Computer Science},
}

\bib{KR83}{article}{
      author={Karo\'nski, M.},
      author={Ruci\'nski, A.},
       title={On the number of strictly balanced subgraphs of a random graph},
        date={1983},
     journal={Graph theory Lag\'ow 1981, Lecture notes in matematics},
       pages={79--83},
      volume={1018},
   publisher={Springer, Berlin},
}

\bib{KV04}{article}{
      author={Kim, J.~H.},
      author={Vu, V.~H.},
       title={Divide and conquer martingales and the number of triangles in a random graph},
        date={2004},
     journal={Random Structures \& Algorithms},
       pages={166-174},
      volume={24},
}

\bib{McD}{article}{
      author={McDiarmid, C.},
       title={On the method of bounded differences},
        date={1989},
     journal={Surveys in Combinatorics (Proceedings, Norwich 1989)},
      series={London Math. Soc. Lecture Note Set. 141},
   publisher={Cambridge Univ. Press, Cambridge},
       pages={148--188},
}

\bib{Ru88}{article}{
      author={Ruci\'nski, A.},
       title={When are small subgraphs of a random graph normally distributed?},
       date={1988},
     journal={Prob. Th. Rel. Fields},
       pages={1--10},
      volume={78},
}

\bib{Sc79}{article}{
      author={Schurger, K.},
       title={Limit theorems for complete subgraphs of random graphs},
       date={1979},
     journal={Per. Math. Hungar.},
       pages={47--53},
      volume={10},
}

\bib{Vu01}{article}{
      author={Vu, Van~H.},
       title={A Large Deviation Result on the Number of Small Subgraphs of a Random Graph},
       date={2001},
     journal={Combinatorics, Probability and Computing},
       pages={79--94},
      volume={10},
}

\bib{Vu02}{article}{
      author={Vu, Van~H.},
       title={Concentration of non-Lipschitz functions and applications},
       date={2002},
     journal={Random Structures \& Algorithms},
       pages={262--316},
      volume={20},
}

\end{biblist}
\end{bibdiv}

\end{document}